\theoremstyle{plain}
\newtheorem{theorem}{Theorem}[section]
\newtheorem*{theorem*}{Theorem}
\newtheorem{prop}[theorem]{Proposition}
\theoremstyle{definition}
\newtheorem{dfn}[theorem]{Definition}
\newtheorem{rem}[theorem]{Remark}
\theoremstyle{remark}
\providecommand*{\twoheadrightarrowfill@}{%
  \arrowfill@\relbar\relbar\twoheadrightarrow
}
\providecommand*{\twoheadleftarrowfill@}{%
  \arrowfill@\twoheadleftarrow\relbar\relbar
}
\providecommand*{\xtwoheadrightarrow}[2][]{%
  \ext@arrow 0579\twoheadrightarrowfill@{#1}{#2}%
}
\providecommand*{\xtwoheadleftarrow}[2][]{%
  \ext@arrow 5097\twoheadleftarrowfill@{#1}{#2}%
}
\newcommand\setItemnumber[1]{\setcounter{enum\romannumeral\@enumdepth}{\numexpr#1-1\relax}}
\DeclareMathOperator\supp{supp}
\newcommand{\R}{\mathbb{R}}
\newcommand{\C}{\mathbb{C}}
\newcommand{\Z}{\mathbb{Z}}
\newcommand{\Rpt}{\mathbb{R}_+^\times}
\begin{document}
\title{A groupoid approach to the Wodzicki residue and the Kontsevich-Vishik trace}
\author{Omar Mohsen\footnote{Paris-Saclay University, Paris, France, \texttt{omar.mohsen@universite-paris-saclay.fr}}}
\date{}
\maketitle
\begin{abstract}
    Building on the work of Debord and Skandalis, van Erp and Yuncken introduced a groupoid approach to pseudo-differential operators which has various advantages over the classical approach using Hörmander's symbolic calculus. 
    In a recent work by Couchet and Yuncken, they showed that for pseudo-differential operators of order $-\dim(M)$ acting on a smooth manifold $M$ the Wodzicki residue can be naturally obtained from van Erp and Yuncken's approach. 
    In this short note, we extend their work to pseudo-differential operators of any order. We also describe of the Kontsevich-Vishik trace.
\end{abstract}
\setcounter{tocdepth}{2} 
\tableofcontents
\section*{Introduction}
In the 80s, Connes \cite{ConnesBook} constructed the tangent groupoid and gave a short proof of Atiyah-Singer index theorem. In \cite{DebordSkandalis1}, 
Debord and Skandalis noticed that the tangent groupoid admits a natural $\Rpt$ action which has the crucial property that the crossed product of a natural ideal of the $C^*$-algebra of the 
tangent groupoid is Morita equivalent to the $C^*$-algebra of classical pseudo-differential operators of order $0$. This essentially says that up to Morita equivalent classical pseudo-differential operators are distributions on the tangent groupoid.
This idea was taken further by van Erp and Yuncken who gave an elegant and simple criterion for distributions on the tangent groupoid to come from pseudo-differential operators. 
Furthermore, they showed that one can relatively easily prove the standard properties of a classical pseudo-differential calculus immediately from this criterion.
This approach also has the advantage of being quite easy to generalise to Lie groupoids, for example the $b$-groupoid \cite{MonthubertbGroupoid} and filtered manifolds \cite{DaveHallerHeat}.

Some notions like the principal symbol are quite direct to obtain from van Erp and Yuncken's approach. On the other hand, this is less clear with the Wodzicki residue \cite{WodzickiResidue,GuilleminWodzickiResidue} and the Kontsevich-Vishik trace \cite{KontsevichVishik}.
 Couchet and Yuncken \cite{couchet2023groupoid} recently gave a simple description of the Wodzicki residue for operators of order $-\dim(M)$. In this article, we extend their formula so that it holds for pseudo-differential operators of any order.
 We also derive a closely related formula which we show agrees with the Kontsevich-Vishik trace.

 This approach to the Wodzicki residue and the Kontsevich-Vishik trace generalises directly to the case of filtered manifolds, which has been considered by Dave and Haller \cite{DaveHallerHeat} and Ponge \cite{PongeWodiz1,PongeWodiz2}. 
 We finally remark that the approach given here together with the groupoid constructed by the author \cite{MohsenTangentCone}, 
 might shed some light on how to construct a Wodzicki residue and a Kontsevich-Vishik trace for the more general case of filtered manifolds, where one is given vector fields 
 satisfying Hörmander's condition but no longer supposes that their rank is locally constant. 
\paragraph*{Acknowledgments}
We thank Couchet, Higson and Yuncken for their remarks. A very similar formula to the one given here was found by Higson, Sukochev and Zanin in an unpublished work. We thank the referee for his remarks.
\section{The groupoid approach to classical pseudo-differential calculus}\label{sec:}


Let $M$ be a smooth manifold. We denote by
    \begin{equation*}\begin{aligned}
        \mathbb{T}M=M\times M\times \Rpt \cup TM\times \{0\}
    \end{aligned}\end{equation*}
the tangent groupoid defined by Connes \cite{ConnesBook}. The space $\mathbb{T}M$ is naturally a Lie groupoid. In fact, there is a unique smooth structure on $\mathbb{T}M$ such that the following maps are smooth: \begin{itemize}
    \item    The map 
        \begin{equation*}\begin{aligned}
            \mathbb{T}M\to M\times M\times \Rpt,\quad \pi(y,x,t)\mapsto(y,x,t),\quad \pi(x,X,0)\mapsto(x,x,0),
        \end{aligned}\end{equation*}
        where $x,y\in M$, $X\in T_xM$, $t\in \Rpt$.
        \item The map    \begin{equation*}\begin{aligned}
            \mathbb{T}M\to \R,\quad(y,x,t)\mapsto\frac{f(y)-f(x)}{t},\quad (x,X,0)\mapsto df_x(X),
        \end{aligned}\end{equation*}
        where $f:M\to \R$ is any smooth function.
\end{itemize}

The space $\mathbb{T}M$ is a groupoid whose space of objects is $M\times \R_+$. Its source and range map are given by
    \begin{equation*}\begin{aligned}
        s(y,x,t)=(x,t),\quad s(x,X,0)=(x,0),\\
        r(y,x,t)=(y,t),\quad r(x,X,0)=(x,0),
    \end{aligned}\end{equation*} 
    The inverse and the product are defined by 
        \begin{equation*}\begin{aligned}
            (y,x,t)^{-1}=(x,y,t),\quad (x,X,t)^{-1}=(x,-X,0)\\
            (z,y,t)\cdot (y,x,t)=(z,x,t),\quad (x,X,0)\cdot (x,Y,0)=(x,X+Y,0).
        \end{aligned}\end{equation*}

We denote by $$\Omega^{\frac{1}{2}}:=|\Lambda|^{\frac{1}{2}}\ker(ds)\otimes |\Lambda|^{\frac{1}{2}}\ker(dr).$$
The space $C^\infty_c(\mathbb{T}M,\Omega^{\frac{1}{2}})$ of compactly supported smooth sections of $\Omega^{\frac{1}{2}}$ is naturally a $*$-algebra.
Here, if $E$ is a vector bundle and $\alpha\in \C$, then  $|\Lambda|^\alpha E$ denotes the bundle of $\alpha$-densities on $E$.
We refer the reader to the article of Debord and Lescure \cite{DebordLescureIndextheoreAndGroupoids} for an introduction to Lie groupoids and their convolution algebras, as well as the tangent groupoid. 

The manifold $\mathbb{T}M$ is naturally equipped with a smooth $\Rpt$-action called the Debord-Skandalis action \cite{DebordSkandalis1} given by 
    \begin{equation*}\begin{aligned}
        \alpha_\lambda:\mathbb{T}M\to \mathbb{T}M,\quad \alpha_\lambda(y,x,t)=(y,x,\lambda^{-1}t),\quad  \alpha_\lambda(x,X,0)=(x,\lambda X,0),\quad \lambda\in \Rpt.
    \end{aligned}\end{equation*}
One easily sees that for any $\lambda\in \Rpt$, $\alpha_\lambda$ is a groupoid automorphism. Therefore, $\alpha_\lambda$ acts by algebra automorphism on $C^\infty_c(\mathbb{T}M,\Omega^{\frac{1}{2}})$. 
It also acts on  $C^{-\infty}(\mathbb{T}M,\Omega^{\frac{1}{2}})$, which denotes the topological dual of $C^\infty_c(\mathbb{T}M,(\Omega^{\frac{1}{2}})^*\otimes |\Lambda|^1T(\mathbb{T}M))$, i.e., the space of distributions on $\mathbb{T}M$ with coefficients in $\Omega^\frac{1}{2}$.
We remark that $(\Omega^{\frac{1}{2}})^*\otimes |\Lambda|^1T(\mathbb{T}M)$ is naturally isomorphic to $\Omega^\frac{1}{2}$.
We refer the reader to \cite{LescureManchonVassout} for an introduction on vector bundle valued distributions.

\begin{dfn}[{van Erp and Yuncken \cite{ErikBobCalculus}}]\label{dfn:}Let $k\in \C$. The space $\mathbb{\Psi}^k(\mathbb{T}M,\Omega^\frac{1}{2})$ denotes the subspace of $u\in C^{-\infty}(\mathbb{T}M,\Omega^{\frac{1}{2}})$ which satisfy the following: \begin{enumerate}
    \item   We have 
        \begin{equation}\label{eqn:WF_cond}\begin{aligned}
            \mathrm{WF}(u)\cap \ker(ds)^{\perp}=\mathrm{WF}(u)\cap\ker(dr)^{\perp}=\emptyset.
        \end{aligned}\end{equation}
    \item The maps $r,s:\supp(u)\to M\times \R_+$ are proper.
    \item For any $\lambda\in \R_+$, 
    \begin{equation}\begin{aligned}
        \Phi_u(\lambda):=\lambda^{-k}\alpha_\lambda(u)-u\in  C^{\infty}_c(\mathbb{T}M,\Omega^{\frac{1}{2}})
     \end{aligned}\end{equation}
\end{enumerate}
\end{dfn}
Distributions satisfying \eqref{eqn:WF_cond} are called admissible in \cite{LescureManchonVassout}. They are precisely the distributions for which convolution law coming from the Lie groupoid structure is well-defined, i.e.,
 if $u$ and $v$ are admissible, then one can define $u\ast v$ which is again admissible.

Let $\pi:\mathbb{T}M\to \R_+$ be the natural projection. The distribution $u$ disintegrates along the fibers of $\pi$, i.e., we can define the pullback of $u$
on each fiber of $\pi$, see \cite[Theorem 8.2.4]{HormanderBook1}. We denote by $u_t$ the restriction of $u$ to the fiber of $\pi$ at $t\in \R_+$. Therefore, $u_t\in C^{-\infty}(M\times M,|\Lambda|^\frac{1}{2}T(M\times M))$ for $t>0$ and 
$u_0\in C^{-\infty}(TM,|\Lambda|^1TM)$. 
\begin{theorem}[{van Erp and Yuncken \cite{ErikBobCalculus}}]\label{thm:}
    If $k\in \C$, $u\in \mathbb{\Psi}^k(\mathbb{T}M,\Omega^\frac{1}{2})$, then $u_1$ is the kernel of a properly supported classical pseudo-differential operator of order $k$. Conversely, the kernel of any
     properly supported classical pseudo-differential operator of order $k$ can be written as $u_1$ for some $u\in \mathbb{\Psi}^k(\mathbb{T}M,\Omega^\frac{1}{2})$.
\end{theorem}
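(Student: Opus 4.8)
The statement is the main theorem of van Erp and Yuncken identifying their definition with the classical one, and the natural route is to localise on $M$ and then to match the essential $\alpha$-homogeneity of condition~(3) with the polyhomogeneous symbol expansion via a fibrewise Fourier transform. \emph{First}, all three conditions defining $\mathbb{\Psi}^k(\mathbb{T}M,\Omega^{\frac{1}{2}})$ and the conclusion ``$u_1$ is the kernel of a properly supported classical pseudo-differential operator of order $k$'' are local on $M$. Using a partition of unity, the properness in condition~(2), and a choice of exponential coordinates $(x,X,t)\mapsto(\exp_x(tX),x,t)$, one reduces to $M=U\subseteq\R^n$ open with $\mathbb{T}U\cong U\times\R^n\times\R_{\ge 0}$ and with $\Omega^{\frac{1}{2}}$ trivialised, so that $u$ becomes a scalar distribution $\tilde u(x,X,t)$ on $U\times\R^n\times\R_{\ge 0}$; here $\tilde u(\cdot,\cdot,1)$ represents $u_1$, read as a Schwartz kernel in the coordinates $(x,y)=(x,\exp_x X)$, and $\tilde u(\cdot,\cdot,0)$ represents the ``model'' $u_0$ on $TU$.

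\emph{Second}, I would pass to the fibrewise Fourier transform $\hat u(x,\xi,t)=\int_{\R^n}\tilde u(x,X,t)\,e^{-iX\cdot\xi}\,dX$ and restate the conditions. The wave front condition \eqref{eqn:WF_cond} says precisely that $\hat u$ is a smooth function on $U\times(\R^n\setminus\{0\})\times\R_{\ge 0}$ with symbolic bounds in $\xi$, so the only possible singular locus is $\{\xi=0\}$; condition~(2) is a support/properness statement in $(x,t)$; and, after unwinding $\alpha_\lambda(x,X,t)=(x,\lambda X,\lambda^{-1}t)$ together with the scaling of the trivialising density, condition~(3) becomes a covariance of $\hat u$ under the joint dilation $(\xi,t)\mapsto(\lambda\xi,\lambda t)$ of weight $k$,
\begin{equation*}\begin{aligned}
  \hat u(x,\lambda\xi,\lambda t)=\lambda^{k}\bigl(\hat u(x,\xi,t)+\hat\Phi_\lambda(x,\xi,t)\bigr),\qquad\lambda\in\R_+,
\end{aligned}\end{equation*}
where $\hat\Phi_\lambda$, the fibrewise Fourier transform of $\Phi_u(\lambda)\in C^\infty_c(\mathbb{T}M,\Omega^{\frac{1}{2}})$, is Schwartz in $\xi$ and smooth and compactly supported in $(x,t)$. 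In particular $\sigma(x,\xi):=\hat u(x,-\xi,1)$ is, up to the sign of $\xi$, the full symbol of the operator with kernel $u_1$, while $\hat u(x,-\xi,0)$ recovers its principal symbol.

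\emph{Third}, I would prove the two directions by the same Taylor expansion at $t=0$. If $u\in\mathbb{\Psi}^k$, then for $|\xi|$ large, writing $\xi=|\xi|\,\omega$ and applying the covariance with $\lambda=|\xi|$ gives $\sigma(x,-\xi)=|\xi|^{k}\hat u(x,\omega,1/|\xi|)$ up to a term that is $\mathcal{O}(|\xi|^{-\infty})$, coming from $\hat\Phi_{|\xi|}$; since $\hat u$ is smooth up to $t=0$ for $\omega\neq0$, Taylor-expanding $t\mapsto\hat u(x,\omega,t)$ at $t=0$ to order $N$ yields $\sigma\sim\sum_{j\ge 0}\sigma_{k-j}$ with $\sigma_{k-j}(x,\xi)=\tfrac{1}{j!}|\xi|^{k-j}\,\partial_t^{\,j}\hat u\bigl(x,\xi/|\xi|,0\bigr)$ positively homogeneous of degree $k-j$, the Taylor remainder providing exactly the classical symbol estimate for $\sigma-\sum_{j<N}\sigma_{k-j}$; together with the fact that $u_1$ is smooth off the diagonal (from \eqref{eqn:WF_cond} and the essential homogeneity) and properly supported (from condition~(2)), this exhibits $u_1$ as the kernel of a properly supported classical pseudo-differential operator of order $k$. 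Conversely, given such an operator with Schwartz kernel $K_P$, I would set $\tilde u(x,X,t)=\chi(X)\,t^{k+n}K_P\bigl(x,\exp_x(tX)\bigr)$, where $\chi\in C^\infty_c(\R^n)$ equals $1$ on a ball large enough to contain the $X$-support of $K_P(x,\exp_x(\cdot))$, and re-insert the trivialising density: polyhomogeneity of the symbol of $P$ makes $t^{k+n}K_P(x,\exp_x(tX))$ extend smoothly across $t=0$ (its Taylor coefficients in $t$ being the homogeneous pieces of the symbol), so $u$ is a distribution on $\mathbb{T}M$ smooth away from $\{X=0\}$, whence \eqref{eqn:WF_cond}; it has compact support in $X$, whence condition~(2); and since the unmodified rescaled kernel is \emph{exactly} $\alpha$-homogeneous of weight $k$ and $\chi$ is $1$ near $X=0$, condition~(3) holds with $\Phi_u(\lambda)=\bigl[\chi(\lambda^{-1}X)-\chi(X)\bigr]\,t^{k+n}K_P(x,\exp_x(tX))\in C^\infty_c(\mathbb{T}M,\Omega^{\frac{1}{2}})$ (the bracket vanishes near $X=0$, killing the singularity of the rescaled kernel, and has compact support as $X$ ranges over an annulus). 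Finally $u_1=K_P$ on the nose by the choice of $\chi$, and patching the charts gives the global statement.

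\emph{Main obstacle.} The analytic heart is the second and third steps: showing that the wave front condition \eqref{eqn:WF_cond} is precisely the hypothesis needed for $\hat u$ to be smooth up to the boundary $t=0$ away from $\{\xi=0\}$, and then that ``$\hat u$ smooth up to $t=0$ off $\{\xi=0\}$, with symbolic bounds and covariant of weight $k$'' is \emph{equivalent} to ``$\hat u(\cdot,\cdot,1)$ is a classical symbol of order $k$''. This demands controlling all Taylor remainders of $t\mapsto\hat u(x,\omega,t)$ at $t=0$ uniformly in $x,\omega$ and in all derivatives so as to produce genuine symbol estimates, and checking that the Schwartz-in-$\xi$ errors $\hat\Phi_\lambda$ contribute only smoothing terms. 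The remaining points—patching the charts, the curvature terms hidden in $\exp$, the density-bundle bookkeeping, and the support arrangements in the converse—are routine but must be carried out with care.
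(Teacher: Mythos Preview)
The paper does not give its own proof of this theorem: it is quoted as a result of van Erp and Yuncken, with only the citation \cite{ErikBobCalculus} (and Remark~\ref{rem:holomorphic_family} pointing to Section~11 of that reference for the constructive converse). There is therefore nothing in the paper to compare your argument against; what you have written is, in outline, a reconstruction of the proof in the cited source, and it follows that source's strategy: localise via exponential-type coordinates, trade the essential $\alpha$-homogeneity for a dilation covariance of the fibrewise Fourier transform, and read off the polyhomogeneous expansion from the Taylor series in $t$ at $t=0$.

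Two technical points in your sketch deserve tightening. First, your claim that condition~\eqref{eqn:WF_cond} ``says precisely that $\hat u$ is a smooth function on $U\times(\R^n\setminus\{0\})\times\R_{\ge 0}$ with symbolic bounds in $\xi$'' overstates what the wave-front hypothesis gives on its own: \eqref{eqn:WF_cond} is what makes the fibrewise restrictions $u_t$ well defined and confines the singular support to the unit space, but the symbol-type growth in $\xi$ genuinely requires the essential homogeneity~(3) together with the joint smoothness of $\Phi_u$ (Proposition~\ref{prop:van_erp_smooth}); you should not attribute it to the wave-front condition alone. Second, in the converse your observation that $tX$ is $\alpha_\lambda$-invariant, so that the uncut kernel $t^{k+n}K_P(x,\exp_x(tX))$ is \emph{exactly} homogeneous, is the right idea; but the displayed formula for $\Phi_u(\lambda)$ is missing the $\lambda$-power coming from the action of $\alpha_\lambda$ on the trivialising density of $\Omega^{\frac{1}{2}}$ (this is the same $\lambda^{-\dim(M)}$ that appears in the cocycle relation at the start of Section~\ref{sec:Wodzicki}). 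Once that factor is inserted the bookkeeping closes, and your argument is essentially the one in \cite{ErikBobCalculus}.
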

The elegance of \Cref{thm:} is that the principal cosymbol of $u_1$ is $u_0$, thus $u$ gives a deformation between the pseudo-differential operator $u_1$ and its cosymbol $u_0$, see \cite[Section 6]{ErikBobCalculus}.

\begin{rem}\label{rem:holomorphic_family}
    The existence $u\in \mathbb{\Psi}^k(\mathbb{T}M,\Omega^\frac{1}{2})$ starting from a properly supported classical pseudo-differential operator is constructive, see \cite[Section 11]{ErikBobCalculus}. 
    In particular if one has a family of properly supported classical pseudo-differential operators which depends holomorphically on some parameter, 
    then one can construct a corresponding family of elements in $\mathbb{\Psi}^k(\mathbb{T}M,\Omega^\frac{1}{2})$ which also depends holomorphically on the same parameter.
\end{rem}
The following proposition, which is a simple consequence of the closed graph theorem, shows that $\Phi_u$ is a jointly smooth function in both $\Rpt$ and $\mathbb{T}M$.
\begin{prop}[{\cite[Lemma 21]{ErikBobCalculus}}]\label{prop:van_erp_smooth}
    Let $k\in   \C$, $u\in \mathbb{\Psi}^k(\mathbb{T}M,\Omega^\frac{1}{2})$. The following map is smooth 
        \begin{equation*}\begin{aligned}
            \Rpt\to C^{\infty}_c(\mathbb{T}M,\Omega^{\frac{1}{2}}),\quad \lambda\to \Phi_u(\lambda).
        \end{aligned}\end{equation*}
\end{prop}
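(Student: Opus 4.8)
\emph{Setup.} Reparametrising by $\lambda=e^t$, set $T_t:=e^{-kt}\alpha_{e^t}$. Since every $\alpha_\lambda$ is a groupoid automorphism it acts on $\Omega^{\frac12}$, so each $T_t$ is a topological automorphism of $C^\infty_c(\mathbb{T}M,\Omega^{\frac12})$ and of $C^{-\infty}(\mathbb{T}M,\Omega^{\frac12})$, and $t\mapsto T_t$ is a one-parameter group. Write $\psi(t):=\Phi_u(e^t)=T_tu-u$. The hypothesis $u\in\mathbb{\Psi}^k(\mathbb{T}M,\Omega^{\frac12})$ says precisely that $\psi$ takes values in $C^\infty_c(\mathbb{T}M,\Omega^{\frac12})$, and the claim is that $\psi\colon\R\to C^\infty_c(\mathbb{T}M,\Omega^{\frac12})$ is smooth. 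The group law yields the cocycle identity
\[
\psi(s+h)-\psi(s)=T_s\bigl(\psi(h)\bigr)\qquad(s,h\in\R),
\]
which drives the argument.

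\emph{Two soft inputs.} First: for fixed $v\in C^\infty_c(\mathbb{T}M,\Omega^{\frac12})$ the curve $t\mapsto T_tv$ is smooth into $C^\infty_c(\mathbb{T}M,\Omega^{\frac12})$, being the push-forward of the fixed compactly supported section $v$ along the smooth family of diffeomorphisms $\alpha_{e^t}$ rescaled by $e^{-kt}$, hence jointly smooth in $(t,\gamma)$ with locally uniformly compact support. Second: $\psi$ is smooth as a curve into $C^{-\infty}(\mathbb{T}M,\Omega^{\frac12})$, since the $\Rpt$-action on distributions is smooth (differentiate the pairing, using the first input for the transported test section) and $u$ is fixed. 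In particular all difference quotients of $\psi$ converge in $C^{-\infty}$; writing $B:=\frac{d}{dt}\big|_{t=0}T_t$, a first-order differential operator on $\mathbb{T}M$ (namely $-k\,\id+\cL_Z$, $Z$ the generator of $t\mapsto\alpha_{e^t}$), which therefore preserves $C^\infty_c(\mathbb{T}M,\Omega^{\frac12})$, we have $\tfrac1h\psi(h)\to Bu$ in $C^{-\infty}$ as $h\to0$.

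\emph{Reduction and bootstrap.} It suffices to show $\tfrac1h\psi(h)$ converges in $C^\infty_c(\mathbb{T}M,\Omega^{\frac12})$ as $h\to0$; by the second input the limit is then forced to be $L:=Bu$, in particular $Bu\in C^\infty_c(\mathbb{T}M,\Omega^{\frac12})$. Granting this, the cocycle identity and continuity of $T_s$ on $C^\infty_c(\mathbb{T}M,\Omega^{\frac12})$ give $\psi'(s)=\lim_{h\to0}T_s\bigl(\tfrac1h\psi(h)\bigr)=T_sL$ for all $s$; and $s\mapsto T_sL$ is smooth into $C^\infty_c(\mathbb{T}M,\Omega^{\frac12})$ by the first input, so $\psi'$ is smooth and hence $\psi$ is smooth --- which also gives the joint smoothness on $\Rpt\times\mathbb{T}M$ via the exponential law. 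To obtain the convergence of $\tfrac1h\psi(h)$ in $C^\infty_c(\mathbb{T}M,\Omega^{\frac12})$, recall that $C^\infty_c(\mathbb{T}M,\Omega^{\frac12})$ is a Montel (LF)-space: it is therefore enough that $\{\tfrac1h\psi(h):0<|h|\le1\}$ be a \emph{bounded} subset, for then it lies in a single step $C^\infty_K(\mathbb{T}M,\Omega^{\frac12})$, is relatively compact there, and relative compactness together with the convergence already known in $C^{-\infty}$ forces convergence in $C^\infty_c(\mathbb{T}M,\Omega^{\frac12})$ (this also absorbs the uniform compactness of supports, which in any case follows at once from the properness in item (2) of \ref{dfn:} and the explicit formula for $\alpha_\lambda$).

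\emph{The main obstacle.} Everything above is soft; the crux is the boundedness in $C^\infty_c(\mathbb{T}M,\Omega^{\frac12})$ of the difference quotients $\{\tfrac1h\psi(h)\}_{0<|h|\le1}$, equivalently a \emph{uniform} bound on all $C^m$-seminorms over a fixed compact. This is where the closed graph theorem is used, following van Erp and Yuncken \cite[Lemma 21]{ErikBobCalculus}: the tension between each $\psi(h)$ being genuinely smooth (item (3) of \ref{dfn:}) and $\psi$ being controlled only weakly (smooth into $C^{-\infty}$) is resolved by exhibiting a suitable linear map between complete topological vector spaces whose graph is closed --- closedness being immediate from the weak control --- and applying the closed graph theorem to deduce its continuity, i.e.\ the sought uniform estimate. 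I expect this functional-analytic step, rather than the differential-geometric bookkeeping, to be the point demanding care.
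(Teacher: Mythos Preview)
The paper does not actually prove this proposition: it simply attributes it to \cite[Lemma~21]{ErikBobCalculus} and remarks that it ``is a simple consequence of the closed graph theorem.'' So there is no detailed argument in the paper to compare against.

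Your write-up goes well beyond the paper's one-line citation. The cocycle identity $\psi(s+h)-\psi(s)=T_s\psi(h)$, the two ``soft inputs'' (smoothness of the action on $C^\infty_c$ and on distributions), and the Montel reduction to a boundedness statement are all correct and clearly organised. In this sense your proposal is \emph{more} informative than the paper's treatment.

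That said, you explicitly stop exactly where the content begins: you do not name the linear map between complete topological vector spaces to which the closed graph theorem is to be applied, nor verify the hypotheses. Since this is the entire point of the lemma --- the ``soft'' parts are routine --- your proposal is a correct reduction but not a proof. Two remarks that may help you close the gap: (i) by the cocycle and the integral trick $\psi(s)=\int_s^{s+1}\psi(r)\,dr-T_s\!\int_0^1\psi(h)\,dh$, mere \emph{continuity} of $\psi$ into $C^\infty_c$ bootstraps to $C^\infty$, so it suffices to show $\{\psi(h):|h|\le1\}$ is bounded in $C^\infty_K$ rather than the difference quotients; (ii) the weak smoothness you have is only against test functions in $C^\infty_c$, which is a proper separating subspace of $(C^\infty_K)'$, so the standard ``weakly smooth implies smooth'' result for Fr\'echet-valued curves does not apply directly --- this is precisely why a closed graph argument is needed, and you should consult the cited reference for the exact formulation.
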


\section{The Wodzicki residue and the Kontsevich-Vishik trace}\label{sec:Wodzicki}

Let $k\in   \C$, $u\in \mathbb{\Psi}^k(\mathbb{T}M,\Omega^\frac{1}{2})$. For any $\lambda\in \Rpt$, since $\Phi_u(\lambda)\in C^{\infty}_c(\mathbb{T}M,\Omega^{\frac{1}{2}})$, we can take its restriction 
on the space of objects, $M\times \R_+$, to obtain a smooth section of $|\Lambda|^1TM$ over $M\times \R_+$. We denote this section by $\Phi_u(\lambda,x,t)$. It satisfies the cocycle relation 
    \begin{equation*}\begin{aligned}
        \Phi_u(\lambda_1\lambda_2,x,t)=\lambda_2^{-k-\dim(M)}\Phi_u(\lambda_1,x,t\lambda_2)+\Phi_u(\lambda_2,x,t).
    \end{aligned}\end{equation*}
This cocycle relation follows immediately from the relation 
    \begin{equation*}\begin{aligned}
        \Phi_u(\lambda_1\lambda_2)=\lambda_2^{-k}\alpha_{\lambda_2}(\Phi_u(\lambda_1))+\Phi_u(\lambda_2).
    \end{aligned}\end{equation*}
    The factor $\lambda_2^{-\dim(M)}$ comes the action of $\Rpt$ on densities, see for example \cite[Proposition 3.2]{LefschetzGroupoid} for more details.

    \begin{prop}\label{prop:main_lemma}
        Let $V$ be a finite dimensional complex vector space, $k\in \C$, $\phi:\Rpt\times \R_+\to V$ a smooth function which satisfies the relation 
            \begin{align}
                \phi(\lambda_1\lambda_2,t)=\lambda_2^{-k}\phi(\lambda_1,t\lambda_2)+\phi(\lambda_2,t),\quad \forall t\in \R_+,\lambda_1,\lambda_2\in \Rpt.\tag{$C_k$}
            \end{align}
         Then, there exists a smooth map $\psi:\R_+\to V$ and $c\in V$ such that 
                \begin{equation}\label{eqn:form2}\begin{aligned}
                    \phi(\lambda,t)=\lambda^{-k}\psi(\lambda t)-\psi(t)+ t^k\log(\lambda)c,\quad \forall (\lambda,t)\in \Rpt\times \R_+
                \end{aligned}\end{equation}
                Furthermore: \begin{itemize}
                    \item  If $k\notin \Z_{\geq 0}$, then $c=0$ and $\psi$ is unique.
                    \item If $k\in \Z_{\geq 0}$, then $c$ is unique and is given by 
                        \begin{equation}\label{eqn:qsdfqsdf}\begin{aligned}
                            c=\frac{1}{k!\log(\lambda)}\frac{d^k}{dt^k}\Bigr|_{t=0}\phi(\lambda,t),
                        \end{aligned}\end{equation}
                     which we show is independent of $\lambda$.
                \end{itemize}
    \end{prop}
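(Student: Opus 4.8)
The plan is to differentiate the relation $(C_k)$ in $\lambda_1$ so as to collapse it to a one-variable problem, integrate back to obtain an explicit formula for $\phi$, read off a candidate $\psi$, and postpone to the end the delicate point, namely smoothness of $\psi$ at $t=0$. Concretely, I would differentiate $(C_k)$ with respect to $\lambda_1$ at $\lambda_1=1$: writing $\partial_1$ for the partial derivative in the first slot and $g:=(\partial_1\phi)(1,\cdot)$ — a smooth $V$-valued function on $\R_+$, since $\phi$ is smooth on $\Rpt\times\R_+$ — the chain rule turns $(C_k)$ into $(\partial_1\phi)(\lambda,t)=\lambda^{-k-1}g(\lambda t)$ for all $(\lambda,t)\in\Rpt\times\R_+$. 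Putting $\lambda_1=\lambda_2=1$ in $(C_k)$ shows $\phi(1,\cdot)\equiv 0$, so integrating in the first variable from $1$ to $\lambda$ gives $\phi(\lambda,t)=\int_1^\lambda\mu^{-k-1}g(\mu t)\,d\mu$, which for $t>0$ becomes $\phi(\lambda,t)=t^k\int_t^{\lambda t}\nu^{-k-1}g(\nu)\,d\nu$ after the substitution $\nu=\mu t$; the scaling structure of $(C_k)$ is now visible.

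Next I would construct $\psi$ from $g$. Fix an integer $p$ with $p>\operatorname{Re}(k)$ and write, by Taylor's formula, $g(\nu)=\sum_{j<p}\frac{g^{(j)}(0)}{j!}\nu^j+\nu^p\rho_p(\nu)$ with $\rho_p$ smooth on $\R_+$. Substituting this into $\phi(\lambda,t)=\int_1^\lambda\mu^{-k-1}g(\mu t)\,d\mu$ and integrating termwise, the monomial $\mu^{j-k-1}$ integrates to $\frac{\lambda^{j-k}-1}{j-k}$ when $j\neq k$ and to $\log\lambda$ when $j=k$, and $j=k$ can occur only when $k\in\Z_{\geq0}$. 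The $j=k$ term contributes exactly $\frac{g^{(k)}(0)}{k!}t^k\log\lambda$; the $j\neq k$ terms, using $\lambda^{j-k}t^j=\lambda^{-k}(\lambda t)^j$, assemble into $\lambda^{-k}P(\lambda t)-P(t)$ for the polynomial $P(s)=\sum_{j<p,\,j\neq k}\frac{g^{(j)}(0)}{j!(j-k)}s^j$; and the remaining tail $t^p\int_1^\lambda\mu^{p-k-1}\rho_p(\mu t)\,d\mu$ equals, after the same substitutions, $\lambda^{-k}Q(\lambda t)-Q(t)$ with $Q(s):=s^k\int_0^s\nu^{p-k-1}\rho_p(\nu)\,d\nu=s^p\int_0^1\sigma^{p-k-1}\rho_p(s\sigma)\,d\sigma$. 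Setting $\psi:=P+Q$ and $c:=g^{(k)}(0)/k!$ (understood as $0$ whenever $k\notin\Z_{\geq0}$), identity \eqref{eqn:form2} then holds for every $t>0$; since $\psi$ is smooth, both sides are continuous at $t=0$, so the identity holds there too.

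The step I expect to be the main obstacle is checking that $\psi$ is smooth at $t=0$. The naive antiderivative $t^kG(t)$ of $\nu^{-k-1}g(\nu)$ inevitably carries a spurious term proportional to $t^k$ (and, when $k\in\Z_{\geq0}$, one proportional to $t^k\log t$), and neither of these is smooth at $0$; the point of subtracting the degree-$(p-1)$ Taylor polynomial of $g$ and integrating the remainder $\nu^{p-k-1}\rho_p(\nu)$ from $0$ — which is legitimate precisely because $p>\operatorname{Re}(k)$ makes $\nu^{p-k-1}\rho_p(\nu)$ integrable near $0$ — is exactly to remove them. Smoothness of $Q$ at $0$ then follows from the rescaled expression $Q(s)=s^p\int_0^1\sigma^{p-k-1}\rho_p(s\sigma)\,d\sigma$: each application of $\partial_s$ to the integrand only raises the power of $\sigma$, so one may differentiate under the integral sign arbitrarily often, $Q\in C^\infty(\R_+)$, and hence $\psi=P+Q$ is smooth on $\R_+$.

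Finally, for the uniqueness clauses, fix any $\lambda\neq1$ in \eqref{eqn:form2}: then $t\mapsto c\,t^k\log\lambda$ is the difference of the smooth functions $t\mapsto\phi(\lambda,t)$ and $t\mapsto\lambda^{-k}\psi(\lambda t)-\psi(t)$, hence is smooth on $\R_+$. If $k\notin\Z_{\geq0}$, then $t\mapsto t^k$ is not smooth at $0$ — its $n$-th derivative is a nonzero multiple of $t^{k-n}$, which is unbounded once $n>\operatorname{Re}(k)$, and for $\operatorname{Re}(k)\leq0$, $k\neq0$, the function is not even continuous there — so necessarily $c=0$; and if $\psi,\psi'$ both satisfy \eqref{eqn:form2} (both with $c=0$), then $h:=\psi-\psi'$ satisfies $h(\lambda t)=\lambda^k h(t)$, whence $h(t)=h(1)t^k$, and smoothness of $h$ forces $h(1)=0$ and $h\equiv0$. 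If instead $k\in\Z_{\geq0}$, applying $\frac{d^k}{dt^k}\big|_{t=0}$ to \eqref{eqn:form2} cancels the $\psi$-contributions and leaves $\frac{d^k}{dt^k}\big|_{t=0}\phi(\lambda,t)=k!\,\log(\lambda)\,c$; this is exactly formula \eqref{eqn:qsdfqsdf}, it shows the right-hand side there is independent of $\lambda$ (it equals $c$), and it determines $c$ uniquely.
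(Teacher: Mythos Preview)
Your argument is correct and complete, but it proceeds quite differently from the paper's own proof. The paper never differentiates $(C_k)$ in $\lambda$; instead it first treats the case $\Re(k)<0$ by building $\psi$ as a dyadic series $\psi(t)=-\sum_{j\ge 0}2^{jk}\phi(\tfrac12,t/2^j)$ and proving a uniqueness lemma (if a solution of $(C_k)$ vanishes at $\lambda=\tfrac12$ it vanishes identically), and then reduces general $k$ to this case by an induction: one checks that $\phi(\lambda,0)$ has the form $(\lambda^{-k}-1)v$ (or $c\log\lambda$ when $k=0$), subtracts it, divides by $t$, and lands in $(C_{k-1})$. Your route---collapse $(C_k)$ to the single function $g=(\partial_1\phi)(1,\cdot)$, recover $\phi$ as $\int_1^\lambda\mu^{-k-1}g(\mu t)\,d\mu$, and Taylor-expand $g$ to order $p>\Re(k)$ so that the remainder integrates from $0$ and produces a manifestly smooth $Q$---is more explicit: it yields a closed formula for $\psi$ in one stroke and makes the origin of the logarithmic term (the single monomial $j=k$) completely transparent. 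The paper's approach, by contrast, uses only the cocycle identity itself and an averaging-type series, which is a bit more structural and would survive with less regularity in $\lambda$, but for the smooth setting at hand your argument is at least as clean. Your uniqueness discussion matches the paper's Steps~8--10 essentially verbatim.
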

    \begin{rem}\label{rem:after_prop}
     Before we give the proof let us explain the motivation behind \Cref{prop:main_lemma}. 
    If we apply \Cref{prop:main_lemma} to $\Phi_u(\cdot,x,\cdot)$ with $u\in C^\infty_c(\mathbb{T}M,\Omega^\frac{1}{2})$, then there is an obvious choice for $\psi$ which is simply the restriction of $u$ itself on the space of objects, i.e., the diagonal.
     Therefore, \Cref{prop:main_lemma} gives a way to extend the definition of the restriction to the diagonal beyond the case $u\in  C^\infty_c(\mathbb{T}M,\Omega^\frac{1}{2})$. This is precisely the Kontsevich-Vishik trace.
    \end{rem}
    
     \begin{proof}
        The proof is divided into a series of straightforward to check steps.
        \begin{enumerate}
            \item If $\psi:\R_+\to V$ is any smooth function, then $(\lambda,t)\mapsto\lambda^{-k}\psi(\lambda t)-\psi(t)$ satisfies $(C_k)$.
            \item Suppose $\Re(k)<0$. If $\phi:\Rpt\times \R_+\to V$ is any smooth function that satisfies $(C_k)$, and $\phi(\frac{1}{2},t)=0$ for all $t\geq 0$, then $\phi=0$. To see this, we apply $(C_k)$ twice to obtain 
                \begin{equation*}\begin{aligned}
                    \phi\left(\frac{\lambda}{2},t\right)=\lambda^{-k}\phi\left(\frac{1}{2},t\lambda\right)+\phi(\lambda,t)=\phi(\lambda,t)\\
                    \phi\left(\frac{\lambda}{2},t\right)=2^k\phi\left(\lambda,\frac{t}{2}\right)+\phi\left(\frac{1}{2},t\right)=2^k\phi\left(\lambda,\frac{t}{2}\right).
                \end{aligned}\end{equation*}
            Therefore, $$\phi(\lambda,t)=2^k\phi\left(\lambda,\frac{t}{2}\right)$$ which by recurrence implies that $$\phi(\lambda,t)=2^{kn}\phi\left(\lambda,\frac{t}{2^n}\right).$$ Taking the limit as $n\to +\infty$, we conclude.
            \item    Suppose $\Re(k)<0$ and $\phi:\Rpt\times \R_+\to V$ a smooth function that satisfies $(C_k)$. We define 
                \begin{equation}\label{eqn:psirezneg}\begin{aligned}
                    \psi(t):=-\sum_{j=0}^\infty \phi\left(\frac{1}{2},\frac{t}{2^j}\right)2^{jk}
                \end{aligned}\end{equation}
            It is straightforward to see that $\psi$ is smooth and that $$\phi\left(\frac{1}{2},t\right)=2^k\psi\left(\frac{t}{2}\right)-\psi\left(t\right).$$ Therefore, the function
             $$(\lambda,t)\mapsto \phi(\lambda,t)-\lambda^{-k}\psi(\lambda t)-\psi(t)$$ satisfies $(C_k)$ and vanishes when $\lambda=\frac{1}{2}$. Hence, by Step 2, 
                 \begin{equation*}\begin{aligned}
                     \phi(\lambda,t)=\lambda^{-k}\psi(\lambda t)-\psi(t).
                 \end{aligned}\end{equation*}
             This finishes the proof of the existence of $\psi$ in the case $\Re(k)<0$.
             \item If $\phi$ satisfies $(C_k)$ and $\phi(\lambda,0)=0$ for all $\lambda\in \Rpt$, then $t^{-1}\phi$ defines a smooth function which satisfies $(C_{k-1})$. Furthermore, if $t^{-1}\phi$ has the form \eqref{eqn:form2},
             then $\phi$ also has the same form. 
             \item If $\phi$ satisfies $(C_0)$, then it is clear that $\phi(\lambda,0)=c\log(\lambda)$ for some $c\in V$.
             \item If $\phi$ satisfies $(C_k)$ and $k\neq 0$, then there exists a unique $v\in V$ such that $\phi(\lambda,0)=(\lambda^{-k}-1)v$. To see this, first suppose $\Re(k)\neq 0$, then by applying $C_k$ twice, we have
                  \begin{equation*}\begin{aligned}
                      \phi(\lambda_1\lambda_2,0)=\lambda_2^{-k}\phi(\lambda_1,0)+\phi(\lambda_2,0)\\
                      \phi(\lambda_1\lambda_2,0)=\lambda_1^{-k}\phi(\lambda_2,0)+\phi(\lambda_1,0)
                  \end{aligned}\end{equation*}
                By taking the difference, we deduce that 
                    \begin{equation*}\begin{aligned}
                        \frac{\phi(\lambda_1,0)}{\lambda_1^{-k}-1}=  \frac{\phi(\lambda_2,0)}{\lambda_2^{-k}-1}.
                    \end{aligned}\end{equation*}
                The result follows. If $\Re(k)=0$, then there are only discrete values of $\lambda$ for which $\lambda^{-k}=1$. We apply the above argument for $\lambda_1,\lambda_2$ avoiding such values together with continuity of $\phi$ to conclude.
                \item The proof of the existence of $\psi$ and $c$ is now a straightforward recursive argument combining Steps $4,5,6$ together with the case $\Re(k)<0$ which we already proved.
                \item Suppose $k\not\in \Z_{\geq 0}$. We will show that $c=0$ and $\psi$ is unique. We have $c=0$ because \eqref{eqn:form2} implies that $t^k\log(\lambda)c$ is smooth at $t=0$.
                By taking the difference of two $\psi$'s which satisfy \eqref{eqn:form2}, without loss of generality, we can suppose that $$\lambda^{-k}\psi(\lambda t)-\psi(t)=0.$$
                This implies that $$\psi(t)=t^k\psi(1)$$
                The left-hand side is smooth at $t=0$ while the right-hand side is smooth only if $\psi(1)=0$. This shows that $\psi=0$.
                \item If $k\in \Z_{\geq 0}$, we will show that $c$ is unique. By taking the difference of two $\psi$ and $c$ which satisfy \eqref{eqn:form2}, without loss of generality, we can suppose that
                $$\lambda^{-k}\psi(\lambda t)-\psi(t)=-t^k\log(\lambda)c.$$ This implies that $$\psi(t)=t^k\psi(1)-t^k\log(t)c.$$ This is smooth at $t=0$ only if $c=0$.
                \item It remains to show that if $k\in \Z_{\geq 0}$, then $c$ is given by \eqref{eqn:qsdfqsdf}. This follows by noticing that if $\phi$ satisfies $(C_k)$, then $\frac{d}{dt}\phi$ satisfies $(C_{k-1})$. We then apply Step 5.\qedhere
        \end{enumerate}
    \end{proof}
    We now apply \Cref{prop:main_lemma} to $\Phi_u(\lambda,x,t)$.
    \begin{enumerate}
        \item If $\Re(k)\not\in  \Z_{\geq -\dim(M)}$, we obtain a smooth section $\Psi_u(x,t)$ of $|\Lambda|^1TM$ over $M\times \R_+$ such that 
            \begin{equation*}\begin{aligned}
                \Phi_u(\lambda,x,t)=\lambda^{-k-\dim(M)}\Psi_u(x,\lambda t)-\Psi_u(x,t).
            \end{aligned}\end{equation*}
        \item If $\Re(k)\in  \Z_{\geq -\dim(M)}$, we obtain a smooth section of $|\Lambda|^1TM$ over $M$ given by
            \begin{equation*}\begin{aligned}
                C_u(x):=\frac{1}{l!\log(\lambda)}\frac{d^{l}}{dt^{l}}\Bigr|_{t=0}\Phi_u(\lambda,x,t),
            \end{aligned}\end{equation*}
            where $l=k+\dim(M)$.
    \end{enumerate}
    \begin{theorem}\label{thm:main_thm}
        Let $u\in \mathbb{\Psi}^k(\mathbb{T}M,\Omega^\frac{1}{2})$. \begin{enumerate}
            \item    If $\Re(k)\not\in \Z_{\geq -\dim(M)}$, then the Kontsevich-Vishik density of $u_1$ at $x\in M$ is equal to $\Psi_u(x,1)$. In particular, if $M$ is compact, then
                \begin{equation*}\begin{aligned}
                \mathrm{TR}(u_1)=\int_M     \Psi_u(x,1)
                \end{aligned}\end{equation*}
            \item If $\Re(k)\in  \Z_{\geq -\dim(M)}$, then the Wodzicki density of $u_1$ at $x\in M$ is equal to $C_u(x)$. In particular, if $M$ is compact, then
            \begin{equation*}\begin{aligned}
            \mathrm{Res}(u_1)=\int_M     C_u(x)
            \end{aligned}\end{equation*}
        \end{enumerate}
    \end{theorem}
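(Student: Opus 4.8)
The plan is to reduce the statement to a local computation in a coordinate chart, settle it by hand in the range $\Re(k)<-\dim M$ where everything is classical, and then propagate it to arbitrary $k$ by means of a holomorphic family, using that the Kontsevich--Vishik trace and the Wodzicki residue are, by their very definitions, the regular value and the residue of such a family.

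\emph{Localisation and the base case.} The densities $\Psi_u(x,1)$ and $C_u(x)$, as well as the Kontsevich--Vishik density and the Wodzicki density of $u_1$, are all local in $x$ and additive in $u$, so using a partition of unity subordinate to coordinate charts we may assume $M=\R^n$, $u$ compactly supported, and it suffices to match the pointwise densities (for $M$ compact the integral formulas then follow by integration). Suppose first that $\Re(k)<-\dim M$. Then the kernel of $u_1$ is continuous, and for $M$ compact $\mathrm{TR}(u_1)$ is the ordinary trace $\int_M (u_1)|_\Delta$ with density the restriction of the kernel to the diagonal $\Delta$. I would prove that $\Psi_u(x,t)=(u_t)|_\Delta(x)$ for all $t>0$, where at $t=0$ the right-hand side is the value at $X=0$ of the cosymbol $u_0$, which vanishes because $\Re(k)<-\dim M$. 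Indeed, restricting the identity $\Phi_u(\lambda)=\lambda^{-k}\alpha_\lambda(u)-u$ to the unit space and accounting for the action of $\alpha_\lambda$ on densities gives, for $t>0$,
\begin{equation*}
\Phi_u(\lambda,x,t)=\lambda^{-k-\dim M}(u_{\lambda t})|_\Delta(x)-(u_t)|_\Delta(x),
\end{equation*}
and iterating with $\lambda=\tfrac12$, using $(u_{t/2^j})|_\Delta(x)\to 0$ as $j\to\infty$, produces the absolutely convergent series
\begin{equation*}
(u_t)|_\Delta(x)=-\sum_{j\ge 0}\Phi_u\!\left(\tfrac12,x,\tfrac{t}{2^j}\right)2^{j(k+\dim M)},
\end{equation*}
which is exactly the function $\psi$ constructed in Step 3 of the proof of \Cref{prop:main_lemma} for $\phi=\Phi_u(\cdot,x,\cdot)$ with homogeneity parameter $k+\dim M$. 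By the uniqueness statement of \Cref{prop:main_lemma}, applicable since $k\notin\Z_{\ge-\dim M}$, this is $\Psi_u$, and \Cref{prop:van_erp_smooth} gives smoothness across $t=0$. Evaluating at $t=1$ proves item~1 in this range; this is where the wavefront condition \eqref{eqn:WF_cond} enters, ensuring the restrictions $(u_t)|_\Delta$ and their smooth dependence on $t$.

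\emph{Analytic continuation.} Fix a positive elliptic classical pseudo-differential operator $Q$ of order $1$. By \Cref{rem:holomorphic_family} lift the holomorphic family $u_1\,Q^{-z}$ of order $k-z$ to a holomorphic family $u^{(z)}\in\mathbb{\Psi}^{k-z}(\mathbb{T}M,\Omega^{\frac12})$ with $u^{(0)}=u$. Unwinding the explicit recursion of \Cref{prop:main_lemma} and using that $\Phi_{u^{(z)}}$ is holomorphic in $z$ (by \Cref{prop:van_erp_smooth} and \Cref{rem:holomorphic_family}), the quantity $\Psi_{u^{(z)}}(x,1)$ is meromorphic in $z$ with at worst simple poles precisely at the $z$ for which $k-z\in\Z_{\ge-\dim M}$; these poles come from the division by $\lambda^{-p}-1$ carried out in Step~6 of the proof of \Cref{prop:main_lemma}, and the residue there is, by \eqref{eqn:qsdfqsdf}, a universal constant times $C_{u^{(z)}}(x)$. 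By the base case, $\int_M\Psi_{u^{(z)}}(x,1)=\operatorname{Tr}(u_1Q^{-z})$ for $\Re(z)$ large, hence for all $z$ by meromorphic continuation, and locally $\Psi_{u^{(z)}}(x,1)$ is the meromorphic family of diagonal kernel values whose value (resp.\ residue) at $z=0$ defines the Kontsevich--Vishik density \cite{KontsevichVishik} (resp.\ is the Wodzicki density \cite{WodzickiResidue}); in particular this also recovers the independence of $Q$, since $\Psi_u$ does not involve $Q$. If $k\notin\Z_{\ge-\dim M}$ then $z=0$ is a regular point on both sides and evaluating there gives item~1. If $k\in\Z_{\ge-\dim M}$ then $z=0$ is at worst a simple pole on both sides, and taking residues, combined with the classical formula expressing $\operatorname{Res}_{z=0}\operatorname{Tr}(u_1Q^{-z})$ as a universal multiple of $\mathrm{Res}(u_1)$ together with the matching of normalising constants, gives item~2.

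\emph{Main obstacle.} The crux is twofold. First, in the base case one must genuinely verify that $t\mapsto(u_t)|_\Delta$ extends smoothly to $t=0$ and coincides with the abstract $\Psi_u$; this requires \eqref{eqn:WF_cond} and \Cref{prop:van_erp_smooth} rather than a naive restriction, and careful bookkeeping of the density factors in the cocycle relation. Second, in the continuation step one must match, on the nose, the pole and residue produced by the construction in \Cref{prop:main_lemma} with the classical meromorphic continuation of $\operatorname{Tr}(u_1Q^{-z})$, so that $C_u(x)$ is identified with the Wodzicki density with the correct constant, not merely up to scale. For the integer case an alternative that avoids re-deriving the local symbol formula is to deform further down to order exactly $-\dim M$ and invoke the theorem of Couchet and Yuncken \cite{couchet2023groupoid}.
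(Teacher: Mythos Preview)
Your proposal is correct and follows essentially the same strategy as the paper: establish the base case where the diagonal restriction makes sense (you treat $\Re(k)<-\dim M$ via the explicit series \eqref{eqn:psirezneg}, the paper treats $u\in C^\infty_c$ via \Cref{rem:after_prop}), then propagate by holomorphic families and the meromorphic behaviour of $\psi$ --- your inline argument for the latter is exactly the content of the paper's \Cref{prop:}. The localisation step and the explicit choice $u_1Q^{-z}$ are inessential additions, and your claim that $(u_0)|_{X=0}$ vanishes is a minor slip: it is merely bounded, which is all that is needed for the remainder $2^{N(k+\dim M)}(u_{t/2^N})|_\Delta$ to tend to zero.
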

    \begin{proof}
        If $u\in C^\infty_c(\mathbb{T}M,\Omega^{\frac{1}{2}})$, then by \Cref{rem:after_prop}, we know  that $\Psi_u(x,1)$ agrees with the restriction to the diagonal of $u_1$, which is by definition the Kontsevich-Vishik density of $u_1$ at $x\in M$.
        Since the Kontsevich-Vishik trace is a holomorphic extension of the trace for trace class operators whose residue is the Wodzicki density, see for example \cite[Théorème 2.3]{VassoutThesis}. \Cref{thm:main_thm} follows from \Cref{rem:holomorphic_family} and the following Proposition.\qedhere
        
        \begin{prop}\label{prop:}
        Let $V$ be a finite dimensional complex vector space, $U\subseteq \C$ an open subset, $\phi:U\times \Rpt\times\R_+\to V$ a smooth function which is holomorphic in $z\in U$ such that $\phi(z,\cdot,\cdot)$ satisfies $(C_z)$ for every $z\in U$. 
        \begin{itemize}
            \item             For every $z\in U\backslash \Z_{\geq 0}$, let $\psi(z,t)$ be the unique map obtained from \Cref{prop:main_lemma} applied to $\phi(z,\cdot,\cdot)$. 
        \item For every $z\in U\cap \Z_{\geq 0}$, let $c(z)$ be the element obtained from \eqref{eqn:qsdfqsdf} applied to $\phi(z,\cdot,\cdot)$.
    \end{itemize}
        Then, the map $\psi(z,t)$ is smooth in $t$ and meromorphic in $z$. It has simple poles at $z\in U\cap \Z_{\geq 0}$ whose residue is equal to $-c(z)$.
        \end{prop}
        \begin{proof}
            By \eqref{eqn:psirezneg}, $\psi$ is clearly holomorphic on $\{z\in U:\Re(z)<0\}$. We will show that it extends to $\{z\in U:\Re(z)<1\}$ with a simple pole at $z=0$.
             For $z\neq 0$, let $v(z)$ be the unique element of $V$ such that $\phi(z,\lambda,0)=(\lambda^{-z}-1)v(z)$. It is clear that $v(z)$ is holomorphic at $z\neq 0$. It has a simple pole at $z=0$ which is equal to 
             $$\lim_{z\to 0}\frac{z\phi(z,\lambda,0)}{\lambda^{-z}-1}=-\frac{\phi(0,\lambda,0)}{\log(\lambda)}=-c(0).$$ We write $$\phi(z,\lambda,t)=t\frac{\phi(z,\lambda,t)-\phi(z,\lambda,0)}{t}+\phi(z,\lambda,0).$$ 
             The function $t^{-1}(\phi(z,\lambda,t)-\phi(z,\lambda,0))$ is holomorphic in $z$ and satisfies $(C_{z-1})$.
             Let $\tilde{\psi}(z,t)$ be the map obtained from \eqref{eqn:psirezneg} applied to $t^{-1}(\phi(z,\lambda,t)-\phi(z,\lambda,0))$. The function $\tilde{\psi}(z,t)$ is clearly holomorphic in $\{z\in U:\Re(z)<1\}$.
             By uniqueness of $\psi$ in \Cref{prop:main_lemma} for $k\notin  \Z_{\geq 0}$, we deduce that $$\psi(z,t)=t\tilde{\psi}(z,t)+v(z).$$ Hence, $\psi(z,t)$ is meromorphic on $\{z\in U:\Re(z)<1\}$ with a simple pole at $z=1$ whose residue is $-c(0)$.
              The case of $\{z\in U:\Re(z)<n\}$ is done by induction and is left to the reader.
        \end{proof}
    \end{proof}
\begin{refcontext}[sorting=nyt]
\printbibliography
\end{refcontext}
\end{document}